\documentclass[10pt,draft,twoside]{amsart}
\usepackage{amssymb}
\usepackage{latexsym}
\usepackage{amsfonts}
\usepackage{amsmath}
\usepackage{hyperref}

\oddsidemargin 0pt
\evensidemargin 0pt
\textheight 8.1in \textwidth 6.3in

\DeclareMathOperator{\Aut}{Aut}
\DeclareMathOperator{\PSL}{PSL}

\DeclareMathOperator{\supp}{supp}
\DeclareMathOperator{\pt}{\mathcal{P}}

\DeclareMathOperator{\B}{\mathcal{B}}

\DeclareMathOperator{\Diag}{Diag}

\DeclareMathOperator{\Mon}{Mon}

\relpenalty=10000
\binoppenalty=10000
\tolerance=500

\newtheorem{theorem}{Theorem}[section]
\newtheorem{proposition}[theorem]{Proposition}
\newtheorem{lemma}[theorem]{Lemma}

\theoremstyle{definition}

\newtheorem{remark}[theorem]{Remark}
\newtheorem{definition}[theorem]{Definition}

\mathsurround=1pt

\headheight 14pt

\parskip 5pt

\newcommand{\F}{\mathbb F}

\renewcommand{\leq}{\leqslant}
\renewcommand{\geq}{\geqslant}

\sloppy

\numberwithin{equation}{section}

\begin{document}

\title[Uniqueness of Certain Completely Regular
 Hadamard Codes] {Uniqueness of Certain\\ Completely Regular Hadamard Codes}
\author{Neil I. Gillespie and Cheryl E. Praeger}
\address{[Gillespie and Praeger] Centre for Mathematics of Symmetry and
Computation\\
School of Mathematics and Statistics\\
The University of Western Australia\\
35 Stirling Highway, Crawley\\
Western Australia 6009}

\email{neil.gillespie@graduate.uwa.edu.au, cheryl.praeger@uwa.edu.au}

\begin{abstract}
We classify binary completely regular codes of length
$m$ with minimum distance $\delta$ for $(m,\delta)=(12,6)$ and
$(11,5)$.  We prove that such codes are unique up to equivalence, and in
particular, are equivalent to certain Hadamard codes.  Moreover, we prove
that these codes are completely transitive.         
\end{abstract}

\thanks{This paper is dedicated to Kathryn Horadam in honour of her 60th
birthday.\\
{\it Date:} draft typeset \today\\
{\it 2000 Mathematics Subject Classification:} 05C25, 20B25, 94B05.\\
{\it Key words and phrases: completely regular codes, completely
  transitive codes, Hadamard codes, Mathieu groups.} \\
This research was supported by the Australian Research Council Federation Fellowship
FF0776186 of the second author, and for the first author, by an Australian Postgraduate Award.} 

\maketitle

\section{Introduction}\label{s1}

In 1973 Delsarte \cite{delsarte} introduced a class of codes with a high
degree of combinatorial symmetry called \emph{completely regular
  codes} (see Definition \ref{cregdef}).  Examples of such codes
include the Hamming, Golay, Preparata, and Kasami codes.  However, a belief
began to emerge amongst coding
theorists that completely regular codes with  
good error correcting capabilities are rare \cite{martin}.  Indeed, in 1992,
Neumaier \cite{neum} conjectured that the only completely regular
code containing more than two codewords with minimum distance at least $8$ is
the extended binary Golay code.  
Despite this, there has been renewed interest in the subject.  Borges, Rif{\`a}
and Zinoviev have
written a series of papers classifying various families and finding
new examples of completely regular codes \cite{rho=2,
  binctrarb, kronprod, extendconst}, and in particular, they give a 
counter example to Neumaier's conjecture \cite{nonantipodal}. Moreover, they also show that many of these examples 
are \emph{completely transitive} (see Definition \ref{cregdef}), a subclass
of completely regular codes. Given these new discoveries, we consider
the following two problems posed by Neumaier in his 1992 
paper.  \begin{itemize}\item Find all `small' completely regular codes
  in Hamming graphs.
\item Characterise the known completely regular codes by their
  parameters.
\end{itemize}  In relation to these two questions we draw attention
to two beautiful examples of binary completely regular codes;
\emph{the Hadamard $12$ code} and \emph{the punctured Hadamard $12$
  code} (see Section \ref{sechad}). In this paper we characterise
these codes by their parameters and prove the following.   

\begin{theorem}\label{mainchhad2} Let $C$ be a binary completely regular code of
length $m$
  with minimum distance $\delta$. 
\begin{itemize} 
\item[(a)]If $m=12$ and $\delta=6$, then $C$ is equivalent to the
  Hadamard $12$ code; 
\item[(b)] If $m=11$ and $\delta=5$, then $C$ is equivalent to the
  punctured Hadamard $12$ code.
\end{itemize}
Moreover the codes $C$ in \textup{(a)} and \textup{(b)} are completely
transitive.
\end{theorem}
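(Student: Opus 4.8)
The plan is to prove both uniqueness and complete transitivity together, exploiting the rigidity that a minimum distance as large as $\delta=6$ (or $5$) forces on a code of such short length. First I would fix the parameters: for part (a), a binary completely regular code $C$ of length $12$ with $\delta=6$ has covering radius $\rho=(\delta-1)/2$ or slightly larger, and I would compute the covering radius and the intersection array directly from the completely regular condition, using the fact that the outer distance distribution is determined by the eigenvalues of the distance-regular quotient. Knowing $\delta=6$ means the minimum-weight codewords pairwise intersect in a controlled way; I would show $|C|$ is forced (for the Hadamard $12$ code one expects $|C|=24$, a Hadamard-type size) by a counting argument tying the number of codewords to the volume of the balls of radius $\rho$ and the sphere-packing/Lloyd-type conditions that completely regular codes must satisfy.

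Next I would pin down the combinatorial structure. The weight-$6$ codewords, viewed as $6$-subsets of a $12$-set, should form a combinatorial design — I expect a $5$-$(12,6,\lambda)$ or related Steiner-like configuration arising from the Mathieu group $M_{12}$, which is consistent with the paper's stated use of Mathieu groups. The key step is to show that the supports of the minimum-weight codewords, together with the complete regularity constraints on how cosets of $C$ distribute by distance, force this design to be isomorphic to the unique such design. Uniqueness of the underlying design (rather than mere existence) is what upgrades ``$C$ has the right parameters'' to ``$C$ is equivalent to the Hadamard $12$ code.'' For part (b), I would obtain the length-$11$ statement by puncturing: a completely regular code with $(m,\delta)=(11,5)$ should be shown to extend uniquely to a completely regular $(12,6)$ code by appending an overall parity check, reducing (b) to (a).

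For the complete transitivity claim I would produce the automorphism group explicitly. Having identified $C$ with the Hadamard $12$ code, its automorphism group in the relevant sense contains $M_{12}$ (or $\Aut(M_{12})$), and I would verify that this group acts transitively on each of the sets of cosets at a fixed distance from $C$ — equivalently, transitively on the cells of the distance partition induced on the ambient Hamming graph. Concretely, one checks that for each $i$ with $0\leq i\leq\rho$, the group is transitive on the cosets $C+x$ with $\min\{d(c,x):c\in C\}=i$; since complete transitivity is exactly this transitivity on distance classes, establishing the $M_{12}$-action on the coset graph completes the proof.

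The hard part will be the uniqueness argument of the first two paragraphs: ruling out all combinatorially conceivable completely regular codes of these parameters other than the intended one. Mere parameter constraints (intersection arrays, Lloyd's theorem, integrality of eigenvalue multiplicities) typically narrow the possibilities but rarely down to a single isomorphism type, so the delicate work is a case analysis on the configuration of minimum-weight codewords, showing every admissible case either collapses to the Hadamard design or violates complete regularity. I anticipate that the sphere-packing and divisibility conditions eliminate most cases quickly, leaving one or two stubborn configurations whose exclusion requires a direct structural contradiction — identifying and dispatching those cases is where the real effort lies.
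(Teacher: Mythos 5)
Your overall architecture (bound $|C|$, show the minimum-weight codewords form a design that is unique up to isomorphism, transfer that uniqueness to the code, then verify transitivity on the distance classes via the Mathieu groups) matches the paper's, but there are concrete gaps in the steps that carry the load. First, the mechanism that produces the design is misidentified: the relevant fact (due to Goethals and van Tilborg, and quoted in the paper) is that for a completely regular code containing $\mathbf{0}$ the weight-$\delta$ codewords form a $t$-design with $t=\lfloor\delta/2\rfloor$, so one gets a $3$-$(12,6,\lambda)$ design, not a $5$-$(12,6,\lambda)$ one; a $5$-design here would need $132$ blocks (the Steiner system $S(5,6,12)$), which is incompatible with $|C|\leq 24$. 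The paper then pins $\lambda=2$ by a short intersection-of-supports count, gets $22$ blocks, and invokes Todd's uniqueness of the (extension of the) $2$-$(11,5,2)$ design. Without naming the $\lfloor\delta/2\rfloor$-design theorem your argument does not get started, and you also do not address how to exclude codewords of weights other than $0$, $6$, $12$ (resp.\ how to rule out $|C|=23$ in the length-$11$ case, which the paper does with a MacWilliams-transform nonnegativity computation). Second, your reduction of (b) to (a) by ``extending uniquely to a completely regular $(12,6)$ code'' is unjustified: extending a completely regular code by a parity check does not in general preserve complete regularity (the paper needs a uniformly-packed hypothesis even for the specific Hadamard codes, in Lemma \ref{ufplem}), and the paper instead treats $(11,5)$ directly with its own design argument.

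On the complete transitivity claim, note that the Hadamard $12$ code is nonlinear, so the formulation in terms of cosets $C+x$ does not apply; the correct statement (and the paper's definition) is transitivity of a subgroup of $\Aut(H(m,2))$ on each cell $C_i$ of the distance partition. Your parenthetical correction captures this, but the actual verification is not routine: the paper must show that the stabiliser $M_{11}$ of the zero codeword has exactly two orbits on $4$-subsets (resp.\ $\PSL(2,11)$ on $3$-subsets), which it does by an inner product of permutation characters from the ATLAS, and only then applies a lemma lifting stabiliser-transitivity on $\Gamma_i(\alpha)\cap C_i$ to transitivity on $C_i$. These orbit computations are the substantive content of that part of the proof and would need to be supplied.
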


In Section \ref{prelim}, we introduce our notation and preliminary
results.  The Hadamard $12$ code is an example of a \emph{Hadamard
  code}, a family of codes generated by Hadamard matrices, which we
describe in Section \ref{autgphad}, and prove that the automorphism
group of a matrix is isomorphic to the automorphism group of the
code it generates.  In the subsequent section we prove that the
Hadamard $12$ and punctured Hadamard $12$ codes are completely
transitive.  In the final section we prove Theorem \ref{mainchhad2}.     

\section{Definitions and Preliminaries}\label{prelim}

Any binary code of length $m$ can be embedded as a subset of the vertex set of
the binary 
Hamming graph $\Gamma=H(m,2)$, which has a vertex set $V(\Gamma)$ that consists
of $m$-tuples with
entries from the field $\F_2=\{0,1\}$, and an edge exists between two
vertices if and only if they differ in precisely one entry.  The
automorphism group of $H(m,2)$, which we denote by $\Aut(\Gamma)$, is
the semi-direct product $B\rtimes L$ where $B\cong S_2^m$ and $L\cong
S_m$ \cite[Thm. 9.2.1]{distreg}.  Let $g=(g_1,\ldots, g_m)\in
B$, $\sigma\in L$ and $\alpha=(\alpha_1,\ldots,\alpha_m)\in V(\Gamma)$. 
Then $g\sigma$ acts on $\alpha$ in the following way:        
\begin{align*}
\alpha^{g\sigma}=&(\alpha_{1^{\sigma^{-1}}}^{g_{1^{\sigma^{-1}}}},\ldots,\alpha_
{m^{\sigma^{-1}}}^{g_{m^{\sigma^{-1}}}}).
\end{align*} Let $M=\{1,\ldots,m\}$.  The \emph{support of $\alpha$}
is the set $\supp(\alpha)=\{i\in M\,:\,\alpha_i\neq 0\}$, and the
\emph{weight of $\alpha$} is the size of the set $\supp(\alpha)$.  For all
pairs of vertices $\alpha,\beta\in V(\Gamma)$, the \emph{Hamming
  distance} between $\alpha$ and $\beta$, denoted by
$d(\alpha,\beta)$, is defined to be the number of entries in which the
two vertices differ.  We let $\Gamma_k(\alpha)$ denote the set of
vertices in $\Gamma$ that are at distance $k$ from $\alpha$.       

A code $C$ in $H(m,2)$ is an $(m,N,\delta)$ code if $N=|C|$, the number of
codewords of $C$, and $\delta$ 
is the \emph{minimum distance of $C$}, the smallest distance between distinct
codewords of $C$.  For
any vertex $\gamma\in V(\Gamma)$, we define the \emph{distance of
  $\gamma$ from $C$} 
to be $d(\gamma,C)=\min\{d(\gamma,\beta)\,:\,\beta\in C\}.$  The 
\emph{covering radius $\rho$ of $C$} is the maximum distance any vertex in
$H(m,2)$ is from $C$.  We let $C_i$ denote the set of vertices that
are distance $i$ from $C$.  It follows that $\{C=C_0, C_1, \ldots,C_\rho\}$
forms
a partition of $V(\Gamma)$, called the \emph{distance
  partition of $C$}.  Let $C'$ be another code in $H(m,2)$.  We say
$C$ and $C'$ are \emph{equivalent} if there exists
$x\in\Aut(\Gamma)$ such that $C^x=C'$, and if $C=C'$, we call $x$ an
\emph{automorphism of $C$}.  The \emph{automorphism group of $C$}
is the setwise stabiliser of $C$ in $\Aut(\Gamma)$, which we denote by
$\Aut(C)$.  The \emph{distance distribution of $C$} is the 
$(m+1)$-tuple $a(C)=(a_0,\ldots,a_m)$
where \begin{equation}\label{ai}a_i=\frac{|\{(\alpha,\beta)\in 
  C^2\,:\,d(\alpha,\beta)=i\}|}{|C|}.\end{equation}   
We observe that $a_i\geq 0$ for all $i$ and $a_0=1$.  Moreover,
$a_i=0$ for $1\leq i\leq \delta-1$ and $|C|=\sum_{i=0}^ma_i$. 
The \emph{MacWilliams transform} of $a(C)$ is the $(m+1)$-tuple 
$a'(C)=(a_0',\ldots,a_m')$
where \begin{equation}\label{kracheqn}a'_k:=\sum_{i=0}^ma_iK_k(i)\end{equation}
with \begin{equation*}K_k(x):=\sum_{j=0}^k(-1)^j\binom{x}{j}\binom{m-x}{k-j}.\end{equation*}
It follows from \cite[Lem. 5.3.3]{vanlint} that $a'_k\geq 0$ for $k=0,1,\ldots,m$.  We say $C$ has
\emph{external distance $s$} if there are exactly $s+1$ nonzero entries in $a'(C)$.    
 
\begin{definition}\label{cregdef}  Let $C$ be code with distance partition
$\{C=C_0, C_1,\ldots, C_\rho\}$
 and $\gamma\in C_i$.  We say $C$ is \emph{completely regular} if
$|\Gamma_k(\gamma)\cap C|$ depends
 only on $i$ and $k$, and not on the choice of $\gamma\in C_i$.  If there exists
$G\leq \Aut(\Gamma)$ 
  such that each $C_i$ is a $G$-orbit, then we say $C$ is \emph{$G$-completely
transitive}, or simply \emph{completely transitive}.
\end{definition}

A narrower definition of complete transitivity for binary linear codes was first
introduced by Sol{\'e} \cite{sole}.  Giudici and the second author \cite{giupra}
extended Sol{\'e}'s definition to cover linear codes over any finite
field, referring to such codes as \emph{coset-completely transitive}.
(This is the definition used by Borges et al. in their series of
papers.)  In the same paper, Giudici and the second author further
generalised this concept. (The definition given here is the one given
in \cite{giupra}, but in the binary case.)  They proved that
coset-completely transitive codes are necessarily completely 
transitive, and that completely transitive codes are completely
regular.  Moreover, they showed that the repetition code of length
$3$, over an alphabet with large enough cardinality, is completely
transitive but not coset-completely transitive.  We give two
additional examples in this paper (Proposition \ref{had12prop} and
\ref{had12pun}), which are binary and non-trivial.  The following
result is straight forward, but useful in Section \ref{sechad} where we prove
the complete transitivity of these examples.  

\begin{lemma}\label{method} Let $C$ be a code in $H(m,2)$ such that
  $\Aut(C)$ acts transitively on $C$.  If $\Aut(C)_\alpha$ acts
  transitively on $\Gamma_i(\alpha)\cap C_i$ for some $\alpha\in C$,
  then $\Aut(C)$ acts transitively on $C_i$. 
\end{lemma}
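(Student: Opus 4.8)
The plan is to reduce the transitivity of $\Aut(C)$ on $C_i$ to the two transitivity hypotheses by transporting the problem into the set $\Gamma_i(\alpha)\cap C_i$ on which the point stabiliser is assumed to be transitive. Concretely, I would take two arbitrary vertices $\gamma,\gamma'\in C_i$ and aim to produce a single element of $\Aut(C)$ carrying $\gamma$ to $\gamma'$; since $\gamma,\gamma'$ are arbitrary, this establishes transitivity on $C_i$.

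The key step is the following observation. If $\gamma\in C_i$, then $d(\gamma,C)=i$, so there is a codeword $\beta\in C$ with $d(\gamma,\beta)=i$. Because $\Aut(C)$ is transitive on $C$, I may choose $g\in\Aut(C)$ with $\beta^g=\alpha$. I then need to verify that $\gamma^g\in\Gamma_i(\alpha)\cap C_i$, and this rests on two facts about $g$. First, as an element of $\Aut(\Gamma)$ it preserves Hamming distance, so $d(\gamma^g,\alpha)=d(\gamma^g,\beta^g)=d(\gamma,\beta)=i$, giving $\gamma^g\in\Gamma_i(\alpha)$. Second, as an element of $\Aut(C)$ it stabilises $C$ setwise and hence preserves the distance partition, so $d(\gamma^g,C)=d(\gamma,C)=i$ and thus $\gamma^g\in C_i$. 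Together these place $\gamma^g$ in $\Gamma_i(\alpha)\cap C_i$.

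Applying the identical construction to $\gamma'$ yields $g'\in\Aut(C)$ with $\gamma'^{g'}\in\Gamma_i(\alpha)\cap C_i$. Now the second hypothesis enters: $\Aut(C)_\alpha$ is transitive on $\Gamma_i(\alpha)\cap C_i$, so there exists $h\in\Aut(C)_\alpha$ with $(\gamma^g)^h=\gamma'^{g'}$. Setting $x=ghg'^{-1}$ then gives $\gamma^x=\gamma'$. The argument is essentially bookkeeping rather than deep, so the only point demanding care is the verification in the previous paragraph that mapping by $g$ returns $\gamma$ to $\Gamma_i(\alpha)\cap C_i$ — in particular that $g$ respects the distance partition, so that the distance-$i$ property of $\gamma$ survives and the point stabiliser's transitivity can be invoked. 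Provided $h$ is taken from the stabiliser $\Aut(C)_\alpha$ as the second hypothesis supplies, it lies in $\Aut(C)$, whence the composite $x=ghg'^{-1}$ lies in $\Aut(C)$ as well and the three pieces fit together cleanly.
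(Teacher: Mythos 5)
Your argument is correct and complete: transporting each $\gamma\in C_i$ into $\Gamma_i(\alpha)\cap C_i$ via transitivity on $C$, using that automorphisms of $C$ preserve both Hamming distance and the distance partition, and then conjugating by an element of the stabiliser is exactly the intended argument. The paper omits the proof entirely, declaring the lemma ``straight forward,'' and your write-up supplies precisely the bookkeeping that remark presupposes.
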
  

A code $C$ with covering radius $\rho$ and minimum distance $\delta$ is \emph{uniformly packed (in the wide
sense)}, if there exist rational numbers $\lambda_0,\ldots,\lambda_\rho$ such that 
$\sum_{i=0}^{\rho} \lambda_i f_k(\nu)=1$ for all vertices $\nu$, where
$f_k(\nu)=|\Gamma_k(\nu)\cap C|$.  The case $\rho=e+1$, with $e=\lfloor\frac{\delta-1}{2}\rfloor$, 
corresponds to the \emph{uniformly packed codes} defined by Goethals and van Tilborg \cite{ufpc1}.
For any code $C$ of length $m$, the \emph{extended code $C^*$} of length $m+1$ is generated
by adding a parity check bit to each codeword of $C$.  In the case that $C$ is a 
binary uniformly packed code, Bassalygo and Zinoviev \cite{remufp} gave necessary and
sufficient conditions for $C^*$ to be uniformly packed.  The following result seems to be well known, 
but the authors could not find a proof in the literature, so we give a simple argument here.

\begin{lemma}\label{ufplem} Let $C$ be a binary uniformly packed code with covering radius $\rho=e+1$,
where $e=\lfloor\frac{\delta-1}{2}\rfloor$, such that the extended code $C^*$ is uniformly packed.
Then $C^*$ is completely regular.
\end{lemma}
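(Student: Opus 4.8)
The plan is to show that a code $C^*$ which is uniformly packed with covering radius $\rho^*$ is completely regular by establishing that, for each vertex $\gamma$, the numbers $f_k(\gamma)=|\Gamma_k(\gamma)\cap C^*|$ depend only on the distance $d(\gamma,C^*)$. The key observation is that complete regularity is equivalent to the outer distribution being constant on each cell $C_i^*$ of the distance partition, and the uniform packing condition gives one linear relation among the $f_k$. So first I would set up the outer distribution matrix and recall the standard fact (from Delsarte's theory, or \cite{distreg}) that a code is completely regular if and only if the outer distribution $B(\gamma)=(f_0(\gamma),\ldots,f_m(\gamma))$ takes a constant value on each $C_i^*$, and moreover that the external distance $s$ of a code always satisfies $\rho\leq s$, with equality precisely when the code is completely regular and the span of the outer distribution vectors has the minimal dimension $s+1$.

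Next I would exploit the structure specific to extended codes. Since $C^*$ arises from $C$ by a parity check, every codeword of $C^*$ has even weight, so $C^*$ lies in the even-weight subgraph; this forces $f_k(\gamma)=0$ whenever the parity of $k$ is wrong relative to $d(\gamma,C^*)$, cutting the number of potentially nonzero $f_k(\gamma)$ roughly in half. Combined with the basic bound $\rho^*\le e^*+1$ inherited from the uniform packing of $C^*$, and the relation $e=\lfloor(\delta-1)/2\rfloor$ for the original code, I would pin down $\rho^*$ exactly (it should be $e+1$ as well, since extending by a parity bit increases both $\delta$ and $m$ by one, leaving $e$ unchanged in the relevant range). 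The point of all this is to show that the number of independent quantities $f_k(\gamma)$ that can vary is small enough to be completely determined by $d(\gamma,C^*)$ together with the single uniform-packing identity $\sum_{i}\lambda_i f_k(\gamma)=1$.

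The core of the argument is then a dimension count on the outer distribution. I would argue that the external distance $s^*$ of $C^*$ equals its covering radius $\rho^*$: the uniform packing condition forces the $f_k$ vectors to satisfy enough linear constraints that the rank of the outer distribution matrix equals $\rho^*+1$, whence $s^*=\rho^*$, and by the Delsarte characterization this equality is exactly the statement that $C^*$ is completely regular. Concretely, I would express each row $B(\gamma)$ as an explicit polynomial evaluation in the single variable $d(\gamma,C^*)$ using the Lloyd-type / Krawtchouk machinery, show that these rows span a space of dimension $\rho^*+1$, and invoke $s^*\ge\rho^*$ together with the forced upper bound $s^*\le\rho^*$ to conclude $s^*=\rho^*$.

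The main obstacle, and the step requiring the most care, is establishing the precise relationship between the uniform packing of $C^*$ and the rank of its outer distribution matrix: uniform packing (in the wide sense) gives one affine relation $\sum_i\lambda_i f_k(\nu)=1$ holding for \emph{all} $\nu$, but translating this into the statement that the outer distribution takes only $\rho^*+1$ distinct values—rather than merely bounding the rank—needs the interplay between the parity constraint, the covering radius $\rho^*=e+1$, and the uniform packing coefficients to be made exact. I expect the cleanest route is to reduce to the characterization in \cite{distreg} that a code is completely regular iff its covering radius equals its external distance, and then to verify $s^*=\rho^*$ directly from the wide-sense uniform packing of $C^*$ via the known equivalence between uniformly packed codes and the sharpness of the external-distance bound; the remainder is bookkeeping with the even-weight parity restriction.
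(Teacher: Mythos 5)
Your proposed endgame rests on the claim that ``by the Delsarte characterization this equality [$s^*=\rho^*$] is exactly the statement that $C^*$ is completely regular.'' That equivalence is false. Delsarte's inequality $\rho\leq s$ always holds, and complete regularity does force $s=\rho$, but the converse fails: the condition $s=\rho$ is equivalent to being uniformly packed \emph{in the wide sense}, which is strictly weaker than complete regularity (indeed, if $s=\rho$ implied complete regularity, the lemma would be an immediate tautology and there would be nothing to prove --- the whole content of the statement is bridging exactly this gap). You flag the translation from the uniform-packing identity to ``the outer distribution takes only $\rho^*+1$ distinct values'' as the delicate step, but then propose to discharge it by invoking the nonexistent equivalence, so the central step of your argument is missing. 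The paper closes this gap differently: it computes $\delta^*=2e+2$, gets $\rho^*=\rho+1$ from Brouwer's result on extended codes, gets $s^*=\rho^*=e+2$ from Bassalygo--Zinoviev, and then observes $\delta^*=2s^*-2$; since $C^*$ consists entirely of even-weight codewords, the sufficient criterion in Brouwer--Cohen--Neumaier (p.~347) --- roughly, $\delta\geq 2s-2$ together with evenness implies complete regularity --- applies. The evenness of $C^*$ is thus not mere bookkeeping that ``cuts the nonzero $f_k$ in half''; it is an essential hypothesis of the criterion that actually finishes the proof.

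A secondary error: you assert $\rho^*=e+1$, ``since extending by a parity bit increases both $\delta$ and $m$ by one, leaving $e$ unchanged.'' In fact $\rho^*=\rho+1=e+2$, while $e^*=\lfloor(\delta^*-1)/2\rfloor=\lfloor(2e+1)/2\rfloor=e$, so $\rho^*=e^*+2$; the bound $\rho\leq e+1$ belongs to uniformly packed codes in the narrow Goethals--van Tilborg sense and does not transfer to $C^*$, which is uniformly packed only in the wide sense. This miscount would propagate into your dimension count even if the main equivalence were repaired.
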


\begin{proof}  It is straight forward to deduce that $C^*$ has minimum distance $\delta^*=2e+2$.  
Moreover, by \cite{brou}, $C^*$ has 
covering radius $\rho^*=\rho+1$.  As $C^*$ is uniformly packed, it follows from \cite{remufp} that it 
has external distance $s^*=\rho^*=\rho+1=e+2$.  Consequently, $2s^*-2=2e+2=\delta^*$, and because $C^*$ consists
entirely of codewords of even weight, it follows from \cite[p.347]{distreg} that $C^*$ is completely regular.
\end{proof}

For any $\alpha\in V(\Gamma)$, the \emph{complement of $\alpha$} is the unique vertex
$\alpha^c$ with $\supp(\alpha^c)=M\backslash\supp(\alpha)$, and we say a code $C$
is \emph{antipodal} if $\alpha^c\in C$ for all $\alpha\in C$.

Let $p\in M=\{1,\ldots m\}$ and $C$ be a code in $H(m,2)$.  By
deleting the same coordinate $p$ from each codeword of $C$, we
generate a code $C^{(p)}$ in $H(m-1,2)$, which we call the \emph{punctured
  code of $C$ with respect to $p$}.  We can also describe $C^{(p)}$ in the 
following way.  Let 
$J=\{i_1,\ldots,i_k\}\subseteq M$ and define the following   
map \[\begin{array}{c c c c} \pi_J:&H(m,2)&\longrightarrow&H(J,2)\\  
&(\alpha_1,\ldots,\alpha_m)&\longmapsto&(\alpha_{i_1},\ldots,\alpha_{i_k})\\
\end{array}\]  Note that we can identify $H(J,2)$ with $H(|J|,2)$.  We
define the \emph{projected code of $C$ with respect to $J$} to be the
set $\pi_J(C)=\{\pi_J(\alpha)\,:\,\alpha\in C\}$.  It follows that if 
$J=M\backslash\{p\}$ then $\pi_J(C)=C^{(p)}$.  We have an induced action of
$\Aut(\Gamma)_J=\{g\sigma\in 
\Aut(\Gamma)\,:\,J^\sigma=J\}$ on $H(J,2)$ as follows: for $x\in
\Aut(\Gamma)_J$,
we define $\chi(x):H(J,q)\longrightarrow H(J,q)$ by
$\pi_J(\alpha)\longmapsto\pi_J(\alpha^x)$, and obtain
$\chi(\Aut(\Gamma)_J)\leq\Aut(H(J,2))$ with
$\ker\chi=\{(g_1,\ldots,g_m)\sigma\in\Aut(\Gamma)_J\,:\,j^\sigma=j\textnormal{
  and }g_j=1\textnormal{ for }j\in J\}.$     

Let $\mathcal{D}=(\pt,\B)$ where $\pt$ is a set of points of
cardinality $m$, and $\B$ is a set of $k$-subsets of $\pt$ called
blocks.  Then $\mathcal{D}$ is a \emph{$t-(m,k,\lambda)$ design} if
every $t$-subset of $\pt$ is contained in exactly $\lambda$ blocks of
$\B$.  We let $b$ denote the number of blocks in $\mathcal{D}$.  If 
$\mathcal{D}$ is a $t$-design, then it is also an $i-(m,k,\lambda_i)$
design for $0\leq i\leq t-1$ 
\cite[Corollary 1.6]{camvan}
where
\begin{equation}\label{arith1}\lambda_i\binom{k-i}{t-i}=\lambda\binom{m-i}{t-i}
.\end{equation}
Using this fact we deduce
that
\begin{equation}\label{arith2}\binom{m}{i}\lambda_i=b\binom{k}{i}.\end{equation}
An \emph{automorphism of $\mathcal{D}$} is a permutation of
$\mathcal{P}$ that preserves $\mathcal{B}$, and we let
$\Aut(\mathcal{D})$ denote the group of automorphisms of
$\mathcal{D}$.  For further concepts and definitions about $t$-designs
see \cite{camvan}.    

\begin{remark}\label{desrem} Two designs that appear in this paper are
  $\mathcal{D}$, a $2-(11,5,2)$ design, and $\mathcal{E}$, a $3-(12,6,2)$ 
  design.  Todd proved that $\mathcal{D}$ is the unique design with
  these parameters up to isomorphism, and that
  $\Aut(\mathcal{D})\cong\PSL(2,11)$ \cite{todd}.  Using this 
  we can prove that $\mathcal{E}$ is unique up to isomorphism (see \cite[Ch.
2]{ngthesis}, for example).  Furthermore, it 
  is known that $\Aut(\mathcal{E})\cong M_{11}$, acting
  $3$-transitively on $12$ points \cite{hughes}.     
\end{remark}

For $\alpha$, $\beta\in V(\Gamma)$, we say
$\alpha$ is \emph{covered} by $\beta$ if for each non-zero 
component $\alpha_i$ of $\alpha$, we have $\alpha_i=\beta_i$.  Let
$\mathcal{D}$ be a set of vertices of weight $k$ in $V(\Gamma)$ and 
$t\leq k$.  We say $\mathcal{D}$ is a \emph{$2$-ary
  $t$-$(m,k,\lambda)$ design} if for every vertex of weight $t$ in
$V(\Gamma)$ is covered by exactly $\lambda$ vertices of $\mathcal{D}$.
This definition coincides with the usual definition of a $t$-design,
in the sense that the set of blocks of the $t$-design is the set of
supports of vertices in $\mathcal{D}$, and as such we simply refer to $2$-ary
designs as 
$t$-designs.  It is known that, for a completely regular code $C$ in $H(m,2)$ with 
zero codeword and minimum distance $\delta$, the set $C(k)$ of codewords of weight $k$ 
forms a $t-(m,k,\lambda)$ design for some $\lambda$ with 
$t=\lfloor\frac{\delta}{2}\rfloor$ \cite{ufpc1}.        

\section{Hadamard Codes and their Automorphism Groups}\label{autgphad}

A \emph{Hadamard matrix} of order $m$ is an $m\times m$ matrix $H$
with entries $\pm 1$ satisfying $HH^T=mI$.  We denote 
the $i$th row of a Hadamard matrix $H$ by $r_i$.  A consequence of 
the definition of $H$ is that $(r_i\cdot r_j)=m\delta_{ij}$, where $(-
\cdot -)$ denotes the standard dot product and $\delta_{ij}$ is the
Kronecker delta function.  Any two Hadamard matrices $H_1$, $H_2$ of
order $m$ are said to be \emph{equivalent} if there exist $m\times m$
monomial matrices $P$, $U$, with nonzero entries $\pm 1$, such that
$H_1=PH_2U$.  Let $\Mon_m(\pm 1)$ denote the group of $m\times m$
monomial matrices with nonzero entries $\pm 1$.   For $P,U\in
\Mon_m(\pm 1)$, we let $\varphi_{P,U}$ denote the map $H\mapsto PHU$,
and call $\varphi_{P,U}$ an \emph{automorphism of $H$} if $H=PHU$.  We
denote the group of automorphisms of $H$ by $\Aut(H)$.  

The Hadamard codes were first introduced by Bose and Shrikhande
\cite{codeconstruct}, and 
studied further in \cite{remufp, ufpc1}.  Our treatment follows that in
\cite{codeconstruct, ufpc1}.
Let $V_m(\pm 1)=\{(a_1,\ldots,a_m)\,:\,a_i=\pm 1\}$ and define the following
bijection: \begin{equation*}\begin{array}{ccc}\begin{array}{ccc} 
      \kappa: V_m(\pm
      1)&\longrightarrow&H(m,2)\\
(a_1,\ldots,a_m)&\longmapsto&(\alpha_1,\ldots,\alpha_m)
\end{array}&\textnormal{where}&\alpha_i=\left\{\begin{array}{cl}
    1&\textnormal{if $a_i=-1$}\\ 0&\textnormal{if
      $a_i=1$.} \end{array}\right.\end{array}\end{equation*}
The \emph{Hadamard code}
  generated by a Hadamard matrix $H$ of order $m$ is defined as $$C(H):=\{\kappa(r)\,:\,\textnormal{$r$ is a
  row of $H$ or $-H$}\}.$$ Bose and Shrikhande showed that $C(H)$ is an $(m,2m,\frac{1}{2}m)$ code.
  Also, we note that since $\kappa(-r)\in
C(H)$ for all $\kappa(r)\in C(H)$, it
follows that $C(H)$ is antipodal.  Let $\tau$ be the
monomorphism \begin{equation}\label{tau}\begin{array}{ccc}      
\tau: \Aut(H)&\longrightarrow&\Mon_m(\pm 1)\\
\varphi_{P,U}&\longmapsto&U
\end{array}\end{equation}  Any $U\in \Mon_m(\pm 1)$ can be described
uniquely as a product of a diagonal matrix
$U_D=\Diag(u_1,\ldots,u_m)$, with $u_i=\pm 1$, 
followed by a permutation matrix $U_\sigma=(u_{ij})$, where $\sigma\in
S_m$ and $u_{ij}=0$ if $j\neq i^\sigma$, $u_{i,i^\sigma}=1$.  If
$\Gamma=H(m,2)$ it follows that 
$\Mon_m(\pm 1)\cong\Aut(\Gamma)$ via the
isomorphism  \begin{equation}\label{theta}\begin{array}{rcl} 
\theta: \Mon_m(\pm 1)&\longrightarrow&\Aut(\Gamma)\\
U=U_DU_\sigma&\longmapsto&(\theta^*(u_1),\ldots,\theta^*(u_m))\sigma, 
\end{array}\end{equation} where $\theta^*$ is the unique isomorphism
from the multiplicative group $Z=\{1,-1\}$ to $S_2$.  With respect to
$\theta$ and $\kappa$, the action of $\Mon_m(\pm 1)$ on $V_m(\pm 1)$ 
is permutationally isomorphic to $\Aut(\Gamma)$ acting on $V(\Gamma)$.
That is \begin{equation}\label{permiso} 
  \kappa(vU)=\kappa(v)^{U\theta},\,\,\,\,\forall\,U\in \Mon_m(\pm
1),\,\forall\,v\in
  V_m(\pm 1).\end{equation}  

\begin{proposition}\label{autchad} Let $H$ be a Hadamard
  matrix of order $m$.  Then $\tau\theta$ is an isomorphism from
  $\Aut(H)$ to $\Aut(C(H))$, where $\tau$, $\theta$ are as in
  (\ref{tau}), (\ref{theta}) respectively.      
\end{proposition}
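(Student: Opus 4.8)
The plan is to establish that $\tau\theta$ is an isomorphism by verifying three things: that $\tau\theta$ maps $\Aut(H)$ into $\Aut(C(H))$, that it is injective, and that it is surjective. The map $\tau$ is already stated to be a monomorphism, and $\theta$ is stated to be an isomorphism from $\Mon_m(\pm 1)$ to $\Aut(\Gamma)$, so the composite $\tau\theta$ is automatically an injective homomorphism from $\Aut(H)$ into $\Aut(\Gamma)$. Thus injectivity is free, and the real content is (i) showing the image lands inside $\Aut(C(H))$ rather than just $\Aut(\Gamma)$, and (ii) showing every automorphism of $C(H)$ arises this way.

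First I would prove that $\tau\theta$ lands in $\Aut(C(H))$. Take $\varphi_{P,U}\in\Aut(H)$, so $H=PHU$. I want to show $U\theta$ stabilises $C(H)$ setwise. The key computational identity is (\ref{permiso}), namely $\kappa(vU)=\kappa(v)^{U\theta}$. Since $C(H)=\{\kappa(r): r \text{ a row of } H \text{ or } -H\}$, I would take a generic codeword $\kappa(r)$ and compute its image $\kappa(r)^{U\theta}=\kappa(rU)$ using (\ref{permiso}). The point is that right-multiplication of a row vector $r$ of $H$ by $U$ produces a row of $HU$, and from $H=PHU$ we get $HU=P^{-1}H$; since $P^{-1}$ is a $\pm 1$ monomial matrix, each row of $P^{-1}H$ is $\pm$ a row of $H$, hence lies among the rows of $H$ or $-H$. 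Therefore $rU$ is again such a row, so $\kappa(rU)\in C(H)$, giving $C(H)^{U\theta}=C(H)$, i.e. $U\theta\in\Aut(C(H))$.

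The main obstacle is surjectivity. Given an arbitrary $x\in\Aut(C(H))$, I must produce a preimage in $\Aut(H)$. Since $\theta$ is an isomorphism $\Mon_m(\pm 1)\to\Aut(\Gamma)$, there is a unique $U\in\Mon_m(\pm 1)$ with $U\theta=x$, and it stabilises $C(H)$. Reversing the computation above via (\ref{permiso}), the stabilising condition says that right-multiplication by $U$ permutes the set of rows of $H$ and $-H$ among themselves. I would then argue that this row-permutation, which respects the $\pm$ pairing coming from antipodality of $C(H)$, is realised by left-multiplication by a suitable monomial matrix $P\in\Mon_m(\pm 1)$: concretely, for each row $r_i$ of $H$, the vector $r_iU$ equals $\varepsilon_i r_{j(i)}$ for some sign $\varepsilon_i=\pm 1$ and some index $j(i)$, and assembling these signs and the index permutation into a monomial matrix $P'$ yields $HU=P'H$, hence $H=(P')^{-1}HU^{-1}\cdot\ldots$; rearranging gives $H=PHU$ with $P=(P')^{-1}$. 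This exhibits $\varphi_{P,U}\in\Aut(H)$ with $(\varphi_{P,U})\tau\theta=U\theta=x$.

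The delicate point in surjectivity is confirming that the induced action on rows is genuinely by a $\pm 1$ monomial matrix, i.e. that no row of $H$ is sent to $\pm$ itself or otherwise collapses the correspondence, and that the antipodal pairing $\kappa(r)\leftrightarrow\kappa(-r)$ forces the signs to be consistent. Here I would use that the $2m$ rows of $H$ and $-H$ give $2m$ distinct codewords (the minimum distance $\tfrac{1}{2}m$ from Bose and Shrikhande guarantees distinctness), so the map $r\mapsto rU$ is a bijection on this $2m$-element set that commutes with negation; this is exactly the data of a monomial matrix acting on the $m$ rows of $H$. Once this structural fact is in place, surjectivity follows and $\tau\theta$ is an isomorphism.
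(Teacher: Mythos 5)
Your proposal is correct and follows essentially the same route as the paper: both directions rest on the identity $\kappa(vU)=\kappa(v)^{U\theta}$ from (\ref{permiso}) together with the observation that $U$ (or $U^{-1}$) must permute the $2m$ signed rows of $H$. The only cosmetic difference is in surjectivity, where you assemble the monomial matrix $P$ directly from the induced signed permutation of rows (using invertibility of $U$ to get a genuine permutation), whereas the paper defines $P:=HU^{-1}H^{-1}$ and verifies monomiality via $H^{-1}=\tfrac{1}{m}H^{T}$ and pairwise orthogonality of the rows; the two arguments are interchangeable.
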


\begin{proof}  Let $P=P_DP_\sigma,U\in\Mon_m(\pm 1)$ and
  $\overline{H}:=PHU$.  Let $\alpha\in C(H)$, so
  $\alpha=\kappa(\lambda r_i)$ for some row $r_i$ of
  $H$ and $\lambda=\pm 1$. By 
  (\ref{permiso}), it holds that $\kappa(\lambda
  r_i)^{U\theta}=\kappa(\lambda r_iU)$.  Since
  $P^{-1}\overline{H}=HU$, we deduce that $\lambda
  r_iU=\bar{\lambda}\bar{r}_{i^{\sigma^{-1}}}$, where
  $\bar{r}_{i^{\sigma^{-1}}}$ is the $i^{\sigma^{-1}}$ row of
  $\overline{H}$ and $\bar{\lambda}=\pm 1$.  Thus
  $\alpha^{U\theta}=\kappa(\bar{\lambda} \bar{r}_{i^{\sigma^{-1}}})\in
C(\overline{H})$.  As
  $C(H)$ and $C(\overline{H})$ have the same 
  cardinality, it follows that $C(H)^{U\theta}=C(\overline{H})=C(PHU)$.  In
  particular, equivalent Hadamard matrices generate equivalent codes,
  and if $\varphi_{P,U}\in\Aut(H)$ then $U\theta\in\Aut(C)$.  That is,
  $(\Aut(H))\tau\theta\leq\Aut(C(H))$. 

  Let $x\in \Aut(C(H))$ and set $U:=x\theta^{-1}\in
  \Mon_m(\pm 1)$.  Let $P:=HU^{-1}H^{-1}$, so $PHU=H$, and since
  $H^{-1}=\frac{1}{m}H^T$, it follows that
  $P_{ik}=\frac{1}{m}(r_iU^{-1}\cdot r_k)$.  We claim that $P$ is a
  monomial matrix, and so $\varphi_{P,U}\in \Aut(H)$ and
  $(\varphi_{P,U})\tau\theta=x$.  Since $U\theta\in \Aut(C(H))$, we
  deduce that $\kappa(r_iU^{-1})=\kappa(r_i)^{U^{-1}\theta}\in C(H)$
  for each $i$.  In particular, there exist $r_{j_i}$, $\lambda_i=\pm
  1$ such that $r_iU^{-1}=\lambda_i r_{j_i}$ for each $i$.  As $C(H)$ is 
  antipodal, it follows that $\Aut(C(H))$ acts on the set of pairs of
  complementary codewords in $C(H)$, from which we deduce that $U^{-1}$
  maps $\{r_1,\ldots,r_m\}$ to a set of pairwise orthogonal vectors.
  Thus, for each $i$, there exists a unique $k$ such that
  $(r_{j_i}\cdot r_k)\neq 0$, and because these are rows of a Hadamard
  matrix it follows that $(r_{j_i}\cdot r_k)=m\delta_{j_ik}$.  Thus
  $P_{ik}=\lambda_i\delta_{j_ik}$, so $P\in \Mon_m(\pm 1)$ and the
  claim holds.
\end{proof}

\section{The Hadamard $12$ and Punctured Hadamard $12$ Codes}\label{sechad}

In this section we consider Hadamard codes generated by Hadamard
matrices of order $12$.  It is known that a Hadamard matrix of 
order $12$ is unique up to equivalence \cite{hall}, and we saw in the 
proof of Proposition \ref{autchad} that equivalent Hadamard matrices
generate equivalent Hadamard codes.  Therefore we can choose a normalised
Hadamard
matrix $H_{12}$ of order $12$, namely one in which the first row and first
column
have all entries equal to $1$.  We call the Hadamard code $C(H_{12})$ 
the \emph{Hadamard $12$ code}.  Throughout this section $C$ 
denotes the Hadamard $12$ code, and $C^{(1)}$ denotes the \emph{punctured Hadamard
$12$ code} generated by deleting the first
entry of each codeword of $C$.  By \cite{codeconstruct}, $C$ is a
$(12,24,6)$ code and $C^{(1)}$ is an $(11,24,5)$ code.  Van Lint \cite{vanlint}
showed
that $C^{(1)}$ is uniformly packed in the sense of \cite{ufpc1} with covering
radius $\rho^{(1)}=3$.
Thus, by \cite[Cor 12.2]{ufpc1}, $C^{(1)}$ is completely regular.  Because $H_{12}$ is normalised, it follows
that the extended code of $C^{(1)}$ with the parity check bit placed at the front of
each codeword is equal to $C$.  A result by Bassalygo and Zinoviev \cite{remufp} 
therefore implies that $C$ is uniformly packed (in the wide sense), and so by Lemma \ref{ufplem}, 
$C$ is completely regular with covering radius $\rho=4$.  

The code $C$ consists of the zero
vertex, the all $1$ vertex and $22$ vertices of weight $6$.  As $C$ is
completely regular, 
it follows from \cite{ufpc1} that $C(6)$, the set of codewords in $C$ of weight
$6$, forms a $3-(12,6,2)$ design.
In this design, the number of blocks that contain the first entry is
$r=11$.  Thus $C^{(1)}$ consists of $11$ codewords of
weight $5$ and their complements, as well as the zero and all $1$ vertices.  It
follows from 
\cite{codeconstruct} (or from the fact $C^{(1)}$ is completely
regular) that $C^{(1)}(5)$ forms a $2-(11,5,2)$ design.  Hall \cite{hall} showed
that $\Aut(H_{12})$ is 
the (non-split) double cover $2M_{12}$, and therefore, by Proposition
\ref{autchad}, $\Aut(C)\cong 2M_{12}$.  

\begin{proposition}\label{had12prop} The Hadamard $12$ code $C$ is
  $\Aut(C)$-completely transitive.  
\end{proposition}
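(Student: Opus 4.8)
The plan is to verify, for each relevant distance, the two hypotheses of Lemma \ref{method}. Recall that $C$ is completely regular with covering radius $\rho=4$, so its distance partition is $\{C=C_0,C_1,C_2,C_3,C_4\}$. It therefore suffices to prove that $\Aut(C)$ is transitive on $C$ and that, for each $i\in\{1,2,3,4\}$, the stabiliser $\Aut(C)_{\z}$ of the zero codeword acts transitively on $\Gamma_i(\z)\cap C_i$; Lemma \ref{method} then gives transitivity on every $C_i$, which is precisely complete transitivity.

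First I would establish transitivity on $C$. Since $C$ is antipodal, complementation $\alpha\mapsto\alpha^c$ (translation by the all-ones vector) lies in $\Aut(C)$ and is the central involution $z$ of $\Aut(C)\cong 2M_{12}$; it interchanges the two members of each of the $12$ complementary pairs of codewords, one pair being $\{\z,\z^c\}$ and the other eleven consisting of weight-$6$ codewords. Via its action on the $12$ rows of $H_{12}$, the quotient $\Aut(C)/\langle z\rangle\cong M_{12}$ permutes these $12$ pairs transitively (indeed $5$-transitively); combined with $z$, this yields transitivity of $\Aut(C)$ on all $24$ codewords. I would then identify the stabiliser: an automorphism fixing $\z$ must lie in the top group $L\cong S_{12}$, since the translation part is forced to be trivial, so it is a coordinate permutation preserving $C$, hence preserving the set of supports of the weight-$6$ codewords, which is the block set of $\E$. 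Thus $\Aut(C)_{\z}\leq\Aut(\E)\cong M_{11}$, and as $|\Aut(C)_{\z}|=|2M_{12}|/24=7920=|M_{11}|$ we obtain $\Aut(C)_{\z}=\Aut(\E)$, acting $3$-transitively on the $12$ coordinates by Remark \ref{desrem}.

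Next I would determine the sets $\Gamma_i(\z)\cap C_i$. A weight-$i$ vertex $\gamma$ has $d(\gamma,\z)=i$ and $d(\gamma,w)=i+6-2|\supp(\gamma)\cap\supp(w)|$ for each weight-$6$ codeword $w$, while $d(\gamma,\z^c)=12-i$; a short check shows every weight-$i$ vertex lies in $C_i$ for $i\in\{1,2,3\}$, so $\Gamma_i(\z)\cap C_i$ is the full set of weight-$i$ vertices. As $3$-transitivity implies $j$-homogeneity for $j\leq 3$, the stabiliser $\Aut(C)_{\z}\cong M_{11}$ is transitive on the weight-$1$, weight-$2$ and weight-$3$ vertices, and Lemma \ref{method} disposes of $C_1,C_2,C_3$. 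For $i=4$, a weight-$4$ vertex whose support lies inside a block is at distance $2$ from the corresponding codeword, hence lies in $C_2$; the remaining weight-$4$ vertices, those supported on no block, are at distance $4$ from $C$ and constitute $\Gamma_4(\z)\cap C_4$.

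The crux is thus to show $M_{11}$ is transitive on the weight-$4$ vertices supported on no block. Here I would first record that $\E$ is quasi-symmetric: a standard count using $\lambda_1=11$, $\lambda_2=5$, $\lambda_3=2$ shows that each block meets its complement in $0$ points and every other block in exactly $3$ points. In particular no $4$-subset lies in two blocks, so the $22\cdot\binom{6}{4}=330$ block-$4$-subsets are distinct and the non-block $4$-subsets number $\binom{12}{4}-330=165$. Because $\E$ is self-complementary, complementation commutes with $\Aut(\E)$, so the map $\iota$ sending a triple $\Delta$ (the intersection of the two blocks $B,B'$ through it) to $P_0(\Delta):=M\setminus(B\cup B')$ is an $\Aut(\E)$-equivariant involution on the $220$ triples, and $P_0(\Delta)$ is exactly the set of points $x$ for which $\Delta\cup\{x\}$ is a non-block $4$-subset. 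Equivariance gives $\Aut(\E)_{\{\Delta\}}=\Aut(\E)_{\{P_0(\Delta)\}}$, and since $P_0(\Delta)$ is a $3$-set and $\Aut(\E)$ is $3$-transitive, this stabiliser induces the full symmetric group on $P_0(\Delta)$, hence is transitive on it. Therefore $\Aut(\E)$ is transitive on the flags $(\Delta,\Delta\cup\{x\})$ with $x\in P_0(\Delta)$, and projecting onto the $4$-subset shows it is transitive on the $165$ non-block $4$-subsets. A final application of Lemma \ref{method} settles $C_4$, completing the proof. The main obstacle is precisely this weight-$4$ analysis: the distances from weight-$4$ vertices to $C$ do not separate the orbit cleanly, and one must invoke the quasi-symmetric, self-complementary geometry of $\E$ together with $3$-transitivity to pin down the single orbit.
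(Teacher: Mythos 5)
Your proof is correct, and while the overall scaffolding (Lemma \ref{method}, the identification $\Aut(C)_{\mathbf{0}}\cong\Aut(\mathcal{E})\cong M_{11}$ acting $3$-transitively, the disposal of $C_1,C_2,C_3$ via $3$-homogeneity, and the observation that $\Gamma_4(\mathbf{0})$ splits into the block-supported vertices in $C_2$ and the rest in $C_4$) matches the paper, your treatment of the crucial $C_4$ step is genuinely different. The paper shows only that $M_{11}$ has \emph{at least} two orbits on $4$-subsets of the $12$ coordinates and then quotes the ATLAS: the inner product of the permutation characters of $M_{12}$ on $M^{\{4\}}$ and on the cosets of $M_{11}$ equals $2$, forcing \emph{exactly} two orbits and hence transitivity on $\Gamma_4(\mathbf{0})\cap C_4$. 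You instead prove transitivity on the $165$ non-block $4$-subsets directly from the geometry of $\mathcal{E}$: the intersection numbers $0$ and $3$ (which your counts with $\lambda_1=11,\lambda_2=5,\lambda_3=2$ do establish), the resulting $\Aut(\mathcal{E})$-equivariant involution $\Delta\mapsto P_0(\Delta)$ on triples (an involution precisely because $\mathcal{E}$ is self-complementary, which gives the needed equality of setwise stabilisers rather than mere containment), and $3$-transitivity to get the stabiliser of $\Delta$ acting transitively on the $3$-set $P_0(\Delta)$; the flag count $220\cdot 3=165\cdot 4$ confirms the bookkeeping. Your argument is self-contained and elementary where the paper's is a one-line appeal to character tables; the paper's is shorter and avoids needing the quasi-symmetric structure. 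Your preliminary argument for transitivity of $\Aut(C)$ on $C$ (central complementation involution plus transitivity of $M_{12}$ on the $12$ row-pairs) also differs mildly from the paper's index-$24$ argument, but both are sound.
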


\begin{proof}  Let $\mathcal{E}$ be the $3-(12,6,2)$
  design with block set $C(6)$ and $\alpha$ be the zero codeword.  
  Any automorphism $\sigma\in\Aut(C)_\alpha=\Aut(C)\cap L$ naturally
  induces an automorphism of $\mathcal{E}$, and similarly the converse
  holds.  Thus $\Aut(C)_\alpha\cong\Aut(\mathcal{E})\cong M_{11}$
  acting $3$-transitively on $12$ points \cite{hughes}, which is a subgroup of
index $24$
  in $\Aut(C)\cong 2M_{12}$.  Hence $\Aut(C)$ acts transitively on $C$.  Thus,
because 
  $\Aut(C)_\alpha$ acts transitively on $\Gamma_i(\alpha)$ for $i=1,2,3$, and
since $\delta=6$, 
  it follows that $\Aut(C)$ acts transitively on $C_i$ for $i=1,2,3$.    

  Let $M=\{1,\ldots,12\}$ and $\tilde{\nu}\in\Gamma_4(\alpha)$.  Then
  $\tilde{\nu}$ is adjacent to a weight $3$ vertex, which because
  $\delta=6$ is an element of $C_3$.  Thus $\tilde{\nu}\in C_2\cup
  C_3\cup C_4$.  Suppose $\tilde{\nu}\in C_3$.  Then there exists a codeword
  $\beta$ such that $d(\beta,\tilde{\nu})=3$.  Because $\tilde{\nu}$
  has  weight $4$ we deduce that $\beta$ has odd weight.  However,
  $C$ consists of codewords of only even weight.  Therefore
  $\tilde{\nu}\in C_2\cup C_4$.  For any codeword $\beta$ of weight
  $6$, there exist $\binom{6}{4}$ vertices of weight $4$ that are at
  distance $2$ from $\beta$.  Thus $\Gamma_4(\alpha)\cap
  C_2\neq\emptyset$.  Additionally, because 
  $\Aut(C)$ acts transitively on $C$ and leaves the distance partition
  invariant it follows that $\Gamma_4(\alpha)\cap C_4\neq\emptyset$.
  Thus, as $\Aut(C)_\alpha$ fixes $\Gamma_4(\alpha)$ setwise, we deduce
  that $\Aut(C)_\alpha$ has at least two orbits on $\Gamma_4(\alpha)$,
  and so it also has at least two orbits in its induced action on the
  set of $4$-subsets of $M$, which we denote by $M^{\{4\}}$.  The inner 
  product of the permutation characters for the actions of $M_{12}$ on
$M^{\{4\}}$
  and on the right cosets of $M_{11}$ is $2$ (see \cite[p.~33]{atlas}), and is
equal to the
  number of orbits of $\Aut(C)_\alpha\cong M_{11}$ on $M^{\{4\}}$.  Thus 
  $\Aut(C)_\alpha$ has exactly two orbits on $\Gamma_4(\alpha)$, and so acts
transitively on
  $\Gamma_4(\alpha)\cap C_4$.  Therefore Lemma \ref{method} implies
  that $\Aut(C)$ acts transitively on $C_4$.        
\end{proof}

\begin{proposition}\label{had12pun} The punctured Hadamard $12$ code
  $C^{(1)}$ is $\Aut(C^{(1)})$-completely transitive.  
\end{proposition}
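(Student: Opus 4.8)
The plan is to mirror the proof of Proposition~\ref{had12prop} and invoke Lemma~\ref{method}, taking $\alpha=\z$ to be the zero codeword of $C^{(1)}$. First I would pin down the stabiliser $\Aut(C^{(1)})_\alpha=\Aut(C^{(1)})\cap L$: any automorphism fixing $\z$ has trivial translation part (a nontrivial translation moves $\z$), so it lies in $L$. A permutation fixing $\z$ preserves each weight class of $C^{(1)}$, hence fixes the block set $C^{(1)}(5)$ setwise; conversely any automorphism of the design $\mathcal D$ permutes the weight-$5$ codewords and so, together with complementation and the fixed vertices $\z,\1$, preserves $C^{(1)}$. Thus $\Aut(C^{(1)})_\alpha\cong\Aut(\mathcal D)\cong\PSL(2,11)$ by Remark~\ref{desrem}, acting $2$-transitively on the $11$ coordinates.

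The first substantive step is transitivity of $\Aut(C^{(1)})$ on $C_0=C^{(1)}$, and this is the step I expect to be the main obstacle: $\PSL(2,11)$ together with complementation alone keeps $\{\z,\1\}$ and the weight-$5$/$6$ codewords in \emph{separate} orbits, so genuine weight-mixing automorphisms are needed, and these come only from the larger group $2M_{12}$ via puncturing. Concretely, with $J=M\setminus\{1\}$ we have $\pi_J(C)=C^{(1)}$ and $\chi\big(\Aut(C)\cap\Aut(\Gamma)_J\big)\le\Aut(C^{(1)})$. Write $G_1=\Aut(C)\cap\Aut(\Gamma)_J$, the stabiliser in $\Aut(C)\cong 2M_{12}$ of coordinate $1$. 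Since $|C^{(1)}|=24=|C|$, the projection $\pi_J\colon C\to C^{(1)}$ is a bijection that intertwines the $G_1$-action on $C$ with the $\chi(G_1)$-action on $C^{(1)}$, so it suffices to show $G_1$ is transitive on the $24$ codewords of $C$. By Proposition~\ref{had12prop}, $\Aut(C)$ is transitive on these $24$ codewords, and by Remark~\ref{desrem} its subgroup $\Aut(C)_{\z}\cong M_{11}$ is $3$-transitive on the $12$ coordinates, so $\Aut(C)$ is transitive on coordinates as well. Using the standard fact that, for a group transitive on two sets, a point-stabiliser has the same number of orbits on each, the number of $G_1$-orbits on codewords equals the number of $\Aut(C)_{\z}$-orbits on coordinates, namely $1$; hence $G_1$, and therefore $\Aut(C^{(1)})$, is transitive on $C^{(1)}$.

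It remains to check that $\Aut(C^{(1)})_\alpha\cong\PSL(2,11)$ is transitive on $\Gamma_i(\alpha)\cap C_i$ for $i=1,2,3$, after which Lemma~\ref{method} yields transitivity of $\Aut(C^{(1)})$ on each $C_i$ and completes the proof. Since $\delta=5$, every vertex of weight $\le 2$ has $\z$ as its unique nearest codeword, so $\Gamma_1(\alpha)\cap C_1$ and $\Gamma_2(\alpha)\cap C_2$ are all $11$ weight-$1$ and all $55$ weight-$2$ vertices respectively, on both of which $\PSL(2,11)$ acts transitively by $2$-transitivity. For $i=3$, a weight-$3$ vertex lies in $C_2$ when its support is contained in a block of $\mathcal D$ and in $C_3$ otherwise; thus $\Gamma_3(\alpha)\cap C_3$ is the set of $3$-subsets lying in no block. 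A short incidence count identifies these: as $\mathcal D$ is symmetric (it has $11$ blocks on $11$ points) two blocks meet in $\lambda=2$ points, so no $3$-subset lies in two blocks, and comparing the $11\cdot\binom 53=110$ block-incidences with $\binom{11}3=165$ shows exactly $55$ lie in no block.

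Finally I would prove $\PSL(2,11)$ is transitive on these $55$ triples through a canonical, hence $\Aut(\mathcal D)$-equivariant, bijection with the $55$ coordinate pairs. Given a no-block triple $T$, inclusion--exclusion gives $11-(3\cdot 5-3\cdot 2+0)=2$ blocks disjoint from $T$, say $B,B'$; these satisfy $|B\cup B'|=5+5-2=8=|\overline T|$, so $B\cup B'=\overline T$ and $B\cap B'$ is a $2$-subset disjoint from $T$. The assignment $T\mapsto B\cap B'$ is inverse to the map sending a pair $\{a,b\}$ to the complement of the union of its two blocks, so it is a bijection, and it commutes with $\Aut(\mathcal D)=\PSL(2,11)$. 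As $\PSL(2,11)$ is $2$-transitive on the $11$ coordinates it is transitive on pairs, hence on the $55$ no-block triples. This verifies the hypotheses of Lemma~\ref{method} for $i=1,2,3$, and together with transitivity on $C_0$ shows that each $C_i$ is a single $\Aut(C^{(1)})$-orbit, so $C^{(1)}$ is $\Aut(C^{(1)})$-completely transitive.
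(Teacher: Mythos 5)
Your proof is correct, and while the overall skeleton (reduce to Lemma \ref{method}, get transitivity on $C^{(1)}$ by projecting the coordinate stabiliser of $\Aut(C)\cong 2M_{12}$ via the orbit-counting fact, handle $i=1,2$ by $2$-transitivity of $\PSL(2,11)$) matches the paper's, you replace the paper's key computation for $i=3$ with a genuinely different argument. The paper shows only that $\Aut(C^{(1)})_{\alpha'}$ has \emph{at least} two orbits on weight-$3$ vertices (since $\Gamma'_3(\alpha')$ meets both $C^{(1)}_2$ and $C^{(1)}_3$) and then caps the count at exactly two by computing the inner product of permutation characters of $M_{11}$ on $3$-subsets and on cosets of $\PSL(2,11)$ from the ATLAS; you instead identify $\Gamma'_3(\alpha')\cap C^{(1)}_3$ explicitly as the $55$ triples contained in no block of the $2$-$(11,5,2)$ design and exhibit an $\Aut(\mathcal D)$-equivariant bijection $T\mapsto B\cap B'$ (with $B,B'$ the two blocks disjoint from $T$) onto the $55$ coordinate pairs, on which $\PSL(2,11)$ is transitive by $2$-transitivity. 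Your inclusion--exclusion counts ($r=5$, $\lambda=2$, two blocks miss each no-block triple, $B\cup B'=\overline T$) all check out, and the mutual-inverse verification is sound. What your route buys is a self-contained, character-table-free proof; what the paper's route buys is brevity and a uniform template shared with Proposition \ref{had12prop}. You also pin down the full stabiliser $\Aut(C^{(1)})_{\alpha}=\Aut(\mathcal D)\cong\PSL(2,11)$, where the paper only needs containment of $\PSL(2,11)$; either suffices for Lemma \ref{method}.
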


\begin{proof}  Let $\Gamma=H(12,2)$ and $\Gamma'=H(11,2)$.  Recall from Section
\ref{prelim} that we can describe $C^{(1)}$
  as the projected code $\pi_J(C)$ where $J=M\backslash\{1\}$.
  Further recall the homomorphism $\chi$ from $\Aut(\Gamma)_1$
  onto $\Aut(H(J,2))\cong \Aut(\Gamma')$.  It is straight forward to check that 
  $\chi(\Aut(C))\cap\ker\chi=1$ and that $\chi(\Aut(C)_1)\leq\Aut(C^{(1)})$. 
Let $\alpha\in C$ be the zero codeword.  
  Since $\Aut(C)_\alpha\cong M_{11}$ acts transitively on $M$, it follows that
  $\Aut(C)_1$ is transitive on $C$, which implies that $\chi(\Aut(C)_1)$ is
transitive on $C^{(1)}$.    

  Let $\alpha'\in C^{(1)}$ be the zero codeword.  Then $\Aut(C^{(1)})_{\alpha'}$
contains $\chi(\Aut(C)_1\cap\Aut(C)_\alpha)\cong\PSL(2,11)$
  acting $2$-transitively on $J$.  Since $C^{(1)}$ has minimum distance 
  $\delta=5$, arguing as in the proof of Proposition 
  \ref{had12prop}, $\chi(\Aut(C)_1)$ is transitive on $C^{(1)}_i$ for $i=1, 2$;
each weight $3$ vertex in $\Gamma'$ lies in $C_2^{(1)}\cup C_3^{(1)}$; and $\Gamma'_3(\alpha')\cap
C_i^{(1)}$ is 
  non-empty for $i=2, 3$.  Thus $\Aut(C^{(1)})_{\alpha'}$ has at least two
orbits on $\Gamma_3'(\alpha')$,
  and hence at least two orbits in its action on $J^{\{3\}}$.  Then inner
product of the permutation
  characters for the actions of $M_{11}$ on $J^{\{3\}}$ and on the right cosets
of $\PSL(2,11)$ is $2$ 
  (see \cite[p.~18]{atlas}).  Hence $\Aut(C^{(1)})_{\alpha'}$ has two orbits on
$\Gamma'_3(\alpha')$, and
  so acts transitively on $\Gamma'_3(\alpha')\cap C_3^{(1)}$.  Thus Lemma
\ref{method} implies that 
  $\Aut(C^{(1)})$ acts transitively on $C^{(1)}_3$.       
\end{proof}

\section{Proof of Theorem \ref{mainchhad2}}

Let $C$ be a completely regular code in $H(m,2)$ with minimum
distance $\delta$ for $(m,\delta)=(12,6)$ or 
$(11,5)$.  Also, let $A(m,\delta)$ denote the maximum number of
codewords in any binary code of length $m$ with minimum distance 
$\delta$.  By \cite[App. A, Fig. 1]{macwil}, $A(11,5)=24$, and by
\cite[p.~42]{macwil}, $A(12,6)=A(11,5)=24$. Hence $|C|\leq 24$.  

  As automorphisms of the Hamming graph preserve the
  property of complete regularity, by replacing $C$ with an equivalent
  code if necessary, we may assume the zero vertex,
  $\alpha$, is a codeword in $C$. Since $C$ is completely regular with
  minimum distance $\delta$, it follows that
  $C(\delta)=\Gamma_\delta(\alpha)\cap C\neq\emptyset$.  Thus $C(\delta)$ forms 
  a $t-(m,\delta,\lambda)$ design for some $\lambda$, where
  $t=\lfloor\frac{\delta}{2}\rfloor$.  
  In both cases ($(m,\delta)=(12,6)$ or $(11,5)$), we deduce from (\ref{arith1})
  and (\ref{arith2}) that $2$ divides $\lambda$.  
  Consider the codewords of weight $\delta$ whose support
  contains $\{1,2,\ldots,t\}$.  Because $C$ has minimum distance
  $\delta$, it follows that the intersection of the supports of any
  two of these codewords is $\{1,\ldots,t\}$.  Consequently, a simple
  counting argument gives $\lambda\leq (m-t)/(\delta-t).$  In both
  cases we deduce that $\lambda\leq 3$, and so $\lambda=2$.      

  \underline{\bf{Case $(m,\delta)=(12,6)$:}}  As $C(6)$ forms a
  $3-(12,6,2)$ design, it follows that $|C(6)|=22$.  This design is the extension
  of a symmetric $2-(11,5,2)$ design, and so, by \cite[p.11]{camvan},
  we deduce that for all $\beta\in C(6)$, the complement $\beta^c\in C(6)$.  Thus, because $C$ is completely regular,
  it follows that $C$ is antipodal, so $\alpha^c\in C$ and $|C|\geq 1+22+1=24$.  As $|C|\leq 24$, we
  conclude that $C$ consists exactly of $\alpha, \alpha^c$ and $22$ codewords of
  weight $6$ that form a $3-(12,6,2)$ design.  The same holds for the
  Hadamard $12$ code, and by Remark \ref{desrem}, this design is
  unique up to isomorphism.  Thus there exists an automorphism
  $\sigma\in L$ (the top group of $\Aut(\Gamma)$) such that $C^\sigma$
  is the Hadamard $12$ code.  This proves Theorem \ref{mainchhad2}
  (a).       
  
  \underline{\bf{Case $(m,\delta)=(11,5)$:}}  As $C(5)$ forms a
  $2-(11,5,2)$ design it follows that $|C(5)|=11$.  Let $\alpha_1,\alpha_2$ be
  the two codewords in $C(5)$ whose supports contain $\{1,2\}$.  Then $d(\alpha_1,\alpha_2)=6$, 
  and so, because $C$ is completely regular, it follows that $C(6)$ forms
  a $2-(11,6,\mu)$ design for some $\mu$.  We deduce from (\ref{arith1})
  and (\ref{arith2}) that $3$ divides $\mu$ and $11$ divides $|C(6)|$, and so, because $|C|\leq 24$, 
  $\mu=3$ and $|C(6)|=11$.  Consequently $|C|=23$ or $24$.
  If $|C|=23$, then in the distance distribution $a(C)$, $a_0=1$, $a_5=a_6=11$, and $a_i=0$ otherwise.
  However, if this holds then in the MacWilliams transform of $a(C)$ (see \ref{kracheqn}), 
  $a_2'=\Sigma_{i=0}^{11}a_iK_2(i)=-55$, contradicting \cite[Lemma 5.3.3]{vanlint}.  Thus $|C|=24$ 
  and there exists a unique $i\geq 7$ such that $a_i=1$.  Suppose $i\leq 10$.  Then $C(i)$ forms 
  a $2-(12,i,\mu')$ design, and by Fishers inequality \cite[1.14]{camvan}, $|C(i)|\geq 11$ which
  contradicts the fact that $|C|=24$.  Thus $a_{11}=1$, which implies that $C$ is antipodal.
  Thus $C$ consists exactly of $\alpha$, $\alpha^c$, the $11$ codewords of weight $5$ that form a $2-(11,5,2)$ design,
  and their complements, which form a $2-(11,5,3)$ design.  This is also true for the punctured Hadamard
  $12$ code, and because the $2-(11,5,2)$ design is unique up to isomorphism \cite{todd},
  there exists $\sigma\in L$ such that $C^\sigma$ is the punctured
  Hadamard $12$ code.  This proves Theorem \ref{mainchhad2} (b). 

  By Proposition \ref{had12prop} and Proposition \ref{had12pun}, the Hadamard
  $12$ code and punctured Hadamard $12$ code are both completely
  transitive.  Therefore, by \cite{fpa}, any code
  that is equivalent to either the Hadamard $12$ code or punctured
  Hadamard $12$ code is completely transitive.  Thus the final
  statement of Theorem \ref{mainchhad2} follows from parts (a) and (b).

\section[]{Acknowledgments} 
The authors would like to thank the anonymous reviewers
for their valuable comments and suggestions to improve the
quality of this paper. 

\def\cprime{$'$} \def\cprime{$'$}


\begin{thebibliography}{10}
\providecommand{\url}[1]{{#1}}
\providecommand{\urlprefix}{URL }
\expandafter\ifx\csname urlstyle\endcsname\relax
  \providecommand{\doi}[1]{DOI~\discretionary{}{}{}#1}\else
  \providecommand{\doi}{DOI~\discretionary{}{}{}\begingroup
  \urlstyle{rm}\Url}\fi

\bibitem{remufp}
Bassalygo, L.A., Zinov{\cprime}ev, V.A.: A remark on uniformly packed codes.
\newblock Problemy Pereda\v ci Informacii \textbf{13}(3), 22--25 (1977)

\bibitem{nonantipodal}
Borges, J., Rif{\`a}, J., Zinoviev, V.A.: On non-antipodal binary completely
  regular codes.
\newblock Discrete Math. \textbf{308}(16), 3508--3525 (2008)

\bibitem{rho=2}
Borges, J., Rif{\`a}, J., Zinoviev, V.A.: On {$q$}-ary linear completely
  regular codes with {$\rho=2$} and antipodal dual.
\newblock Adv. Math. Commun. \textbf{4}(4), 567--578 (2010)

\bibitem{codeconstruct}
Bose, R.C., Shrikhande, S.S.: A note on a result in the theory of code
  construction.
\newblock Information and Control \textbf{2}, 183--194 (1959)

\bibitem{brou}
Brouwer, A.: On complete regularity of extended codes.
\newblock Discrete Mathematics \textbf{117}(1–3), 271 -- 273 (1993)

\bibitem{distreg}
Brouwer, A.E., Cohen, A.M., Neumaier, A.: Distance-regular graphs,
  \emph{Ergebnisse der Mathematik und ihrer Grenzgebiete (3) [Results in
  Mathematics and Related Areas (3)]}, vol.~18.
\newblock Springer-Verlag, Berlin (1989)

\bibitem{camvan}
Cameron, P.J., van Lint, J.H.: Designs, graphs, codes and their links,
  \emph{London Mathematical Society Student Texts}, vol.~22.
\newblock Cambridge University Press, Cambridge (1991)

\bibitem{atlas}
Conway, J.H., Curtis, R.T., Norton, S.P., Parker, R.A., Wilson, R.A.: Atlas of
  finite groups.
\newblock Oxford University Press, Eynsham (1985).
\newblock Maximal subgroups and ordinary characters for simple groups, With
  computational assistance from J. G. Thackray

\bibitem{delsarte}
Delsarte, P.: An algebraic approach to the association schemes of coding
  theory.
\newblock Philips Res. Rep. Suppl. (10), vi+97 (1973)

\bibitem{ngthesis}
Gillespie, N.I.: Neighbour transitivity on codes in {H}amming graphs.
\newblock Ph.D. thesis, The University of Western Australia, Perth, Australia
  (2011)

\bibitem{fpa}
Gillespie, N.I., Praeger, C.E.: From neighbour transitive codes to frequency
  permutation arrays (2012).
\newblock ArXiv:1204.2900v1

\bibitem{giupra}
Giudici, M., Praeger, C.E.: Completely transitive codes in {H}amming graphs.
\newblock European J. Combin. \textbf{20}(7), 647--661 (1999)

\bibitem{ufpc1}
Goethals, J.M., van Tilborg, H.C.A.: Uniformly packed codes.
\newblock Philips Res. Rep. \textbf{30}, 9--36 (1975)

\bibitem{hall}
Hall Jr., M.: Note on the {M}athieu group {$M_{12}$}.
\newblock Arch. Math. (Basel) \textbf{13}, 334--340 (1962)

\bibitem{hughes}
Hughes, D.R.: On {$t$}-designs and groups.
\newblock Amer. J. Math. \textbf{87}, 761--778 (1965)

\bibitem{vanlint}
van Lint, J.H.: Recent results on perfect codes and related topics.
\newblock In: Combinatorics ({P}roc. {NATO} {A}dvanced {S}tudy {I}nst.,
  {B}reukelen, 1974), pp. 158--178. Math. Centre Tracts, No. 55. Math. Centrum,
  Amsterdam (1974)

\bibitem{macwil}
MacWilliams, F.J., Sloane, N.J.A.: The Theory of Error-Correcting Codes.
\newblock North-Holland Publishing Co., Amsterdam (1977)

\bibitem{martin}
Martin, W.J.: Completely regular codes --- a viewpoint and some problems.
\newblock Proceedings of 2004 Com2MaC Workshop on Distance-Regular Graphs and
  Finite Geometry pp. 43--56 (2004)

\bibitem{neum}
Neumaier, A.: Completely regular codes.
\newblock Discrete Math. \textbf{106/107}, 353--360 (1992).
\newblock A collection of contributions in honour of Jack van Lint

\bibitem{binctrarb}
Rif{\`a}, J., Zinoviev, V.A.: On a class of binary linear completely transitive
  codes with arbitrary covering radius.
\newblock Discrete Math. \textbf{309}(16), 5011--5016 (2009)

\bibitem{kronprod}
Rif{\`a}, J., Zinoviev, V.A.: New completely regular {$q$}-ary codes based on
  {K}ronecker products.
\newblock IEEE Trans. Inform. Theory \textbf{56}(1), 266--272 (2010)

\bibitem{sole}
Sol{\'e}, P.: Completely regular codes and completely transitive codes.
\newblock Discrete Math. \textbf{81}(2), 193--201 (1990)

\bibitem{todd}
Todd, J.: {A combinatorial problem.}
\newblock J. Math. Phys., Mass. Inst. Techn. \textbf{12}, 321--333 (1933)

\bibitem{extendconst}
Zinov{\cprime}ev, V.A., Rifa, D.: On new completely regular {$q$}-ary codes.
\newblock Problemy Peredachi Informatsii \textbf{43}(2), 34--51 (2007)

\end{thebibliography}

\end{document}